\newtheorem{theorem}{Theorem}[section]
\newtheorem{corollary}[theorem]{Corollary}
\def\bb #1{ {\mathbb #1} }
\def\c #1{ {\mathcal #1} }
\begin{document}
\title{$p$-adic unit roots of $L$-functions over finite fields}

\author{C. Douglas Haessig}

\maketitle

\abstract{In this brief note, we consider $p$-adic unit roots or poles of $L$-functions of exponential sums defined over finite fields. In particular, we look at the number of unit roots or poles, and a congruence relation on the units. This raises a question in arithmetic mirror symmetry.}


\section{Introduction}

$L$-functions of exponential sums over finite fields are rational functions. The following note considers two questions concerning the $p$-adic unit roots and poles of these rational functions. First, how many $p$-adic unit roots and poles are there? Second, what type of geometric relation do the unit roots satisfy; more specifically, is there a congruence relation the unit roots satisfy? 

To motivate, consider the affine Legendre family of elliptic curves $E_{\bar \lambda}$ defined by $y^2 = x(x-1)(x-\bar \lambda)$, where $\bar \lambda \in \bb F_p \setminus \{0, 1\}$. Setting
\[
\#E_{\bar \lambda}(\bb F_{p^m}) := \# \{ (\bar x, \bar y) \in \bb F_{p^m}^2 \mid \bar y^2 = \bar x(\bar x-1)(\bar x- \bar \lambda) \},
\]
then it is well-known that the associated zeta function takes the form
\[
Z(E_{\bar \lambda} / \bb F_p, T) := \exp\left( \sum_{m = 1}^\infty \#E_{\bar \lambda}(\bb F_{p^m}) \frac{T^m}{m} \right)
= \frac{(1- \pi_0(\bar \lambda) T)(1 - \pi_1(\bar \lambda)T)}{(1 - pT)}.
\]
Denote by $H(\lambda)$ the series obtained from the hypergeometric series ${}_2 F_1(\frac{1}{2}, \frac{1}{2}; 1; \lambda)$  by taking the terms of $\lambda$ up to degree $(p-1)/2$. If $H(\bar \lambda) \not= 0$, then there is a unique $p$-adic unit root $\pi_0(\bar \lambda)$ of the zeta function, and
\begin{equation}\label{E: formula}
\pi_0(\bar \lambda) \equiv (-1)^{(p-1)/2} H(\bar \lambda) \quad \text{mod } p.
\end{equation}
In this example, the number of unit roots is bounded by the genus 1. For hyperelliptic curves of genus $g$, the number of $p$-adic unit roots is bounded by the genus $g$. The geometric interpretation of (\ref{E: formula}) comes from noting that the hypergeometric series ${}_2 F_1(\frac{1}{2}, \frac{1}{2}; 1; \lambda)$ is the unique holomorphic solution of the associated Picard-Fuchs equation of the Legendre family. Formulas for other families have also been recently studied; see for instance \cite{MR2966711} and \cite{MR2448441}.

In the following, we focus on $L$-functions of exponential sums over a finite field, defined over hypersurfaces in the algebraic torus $\bb G_m^n$ or affine space. The main results are Theorems \ref{T: rational}, \ref{T: main_toric}, and \ref{T: main_affine}. Section \ref{S: one} considers rational functions in one-variable, and it is shown that in many cases which arise in practice, the unit roots are in fact 1-units, meaning they satisfy $|x-1|_p < 1$. Since this is not always the case, the result is then generalized to all rational functions. Sections \ref{S: many} and \ref{S: many_affine} consider $L$-functions in many variables defined over affine hypersurfaces in $\bb G_m^n$ and $\bb A^n$, respectively. Lastly, motivated from the result in Section \ref{S: many}, Section \ref{S: mirror} looks at the question of comparing the $L$-functions of exponential sums of regular functions defined on the complement of two mirror varieties. 

We note that similar questions may be asked for other types of zeta functions, such as  Dwork's unit root $L$-function \cite{Wan-DworkConjectureunit-1999} \cite{Wan-Rankonecase-2000} \cite{Wan-Higherrankcase-2000} \cite{MR3249829} or $L$-functions over rings, such as \cite{MR2516429}. This has already been answered in the case of the zeta function of divisors \cite{MR2516980} \cite{MR2029511} \cite{Wan-ZetaFunctionsof-1992}, and so it would be interesting to study similar questions for the function field height zeta function \cite{MR2423749} \cite{MR1196533}.

{\bf Acknowledgments.} I wish to thank Steven Sperber for his encouragement and suggestions, and Alan Adolphson for his comments and corrections. This work was partially supported by the Simons Foundation.

\section{Rational functions in one variable}\label{S: one}

Let $\bb F_q$ be the finite field with $q$ elements and characteristic $p$, and $\psi$ a non-trivial additive character of $\bb F_q$. Set $\psi_m := \psi \circ \text{Tr}_{\bb F_{q^m} / \bb F_q}$. Let $f/g \in \bb F_q(x)$ be a rational function. Throughout this section, we will assume $f/g$ has a pole at zero and infinity (meaning, $g(0) = 0$ and $deg(f) > deg(g)$), and the order of every pole is not divisible by $p$. For each $m \geq 1$, define the exponential sum
\[
S_m(f/g) := \sum_{\bar x \in \bb F_{q^m}, g(\bar x) \not= 0 } \psi_m( f(\bar x) / g(\bar x) ).
\]
It follows from \cite{MR773094} (or \cite{MR2067435}) that the associated $L$-function
\[
L(f/g, T) := \exp\left(\sum_{m =1}^\infty S_m(f/g) \frac{T^m}{m} \right)
\]
is a polynomial over $\bb Z[\zeta_p]$ of degree $deg(f) + n - 1$ with precisely $n$ $p$-adic unit roots, where $\zeta_p$ is a primitive $p$-th root of unity. When the roots of $g$ lie in the base field $\bb F_q$, then each of the units of the $L$-function must be a \emph{1-unit}, or \emph{principal unit}, as we will now show; this means they satisfy $| x - 1|_p < 1$.

\begin{theorem}\label{T: rational}
Assuming $p > n$ and that every root of $g$ lies in $\bb F_q$, then each $p$-adic unit root of $L(f/g, T)$ is a 1-unit.
\end{theorem}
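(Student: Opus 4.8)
The plan is to carry out the entire argument on the residue-field level, by reducing modulo the maximal ideal $\mathfrak{m}$ of $\Z[\zeta_p]$ lying over $p$, whose residue field is $\F_p$. First I would observe that every value $\psi_m(f(\bar x)/g(\bar x))$ is a $p$-th root of unity, hence congruent to $1$ modulo $\mathfrak{m}$ since $\zeta_p \equiv 1 \pmod{\mathfrak{m}}$. Because $g$ splits completely over $\Fq$ with, say, $s$ distinct roots, all of which already lie in $\Fq \subseteq \F_{q^m}$, the set $\{\bar x \in \F_{q^m} : g(\bar x) \neq 0\}$ has exactly $q^m - s$ elements for every $m$. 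As $p \mid q$, counting terms then yields the clean congruence $S_m(f/g) \equiv -s \pmod{\mathfrak{m}}$ for all $m \geq 1$, with no need to manipulate the $\exp$/$\log$ of the $L$-function.

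Next I would pass to the reciprocal roots. Write $L(f/g, T) = \prod_{j=1}^N (1 - \gamma_j T)$ with $N = \deg(f) + n - 1$, so that $S_m = -\sum_{j=1}^N \gamma_j^m$. By the cited structural result, exactly $n$ of the $\gamma_j$ are $p$-adic units while the remaining $N - n$ have positive valuation; extending $\mathfrak{m}$ to a prime of $\Z[\zeta_p, \gamma_1, \dots, \gamma_N]$ over $p$, each non-unit reciprocal root satisfies $\gamma_j^m \equiv 0 \pmod{\mathfrak{m}}$. Combining this with the congruence from the first step, I obtain, for the reductions $\bar\gamma_1, \dots, \bar\gamma_n \in \FFp^{\times}$ of the $n$ unit roots, the power-sum identities $\sum_{j=1}^n \bar\gamma_j^{\,m} = s$ in $\FFp$ for every $m \geq 1$, where $s$ is read as an element of $\F_p$.

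The heart of the argument, and the step where the hypothesis $p > n$ is essential, is then a power-sum computation. Grouping the $\bar\gamma_j$ into distinct values $\delta_1, \dots, \delta_r \in \FFp^{\times}$ with multiplicities $\mu_1, \dots, \mu_r$, the relations become $\sum_{i=1}^r \mu_i \delta_i^{\,m} = s$ for all $m \geq 1$. Since $\mu_i \leq n < p$, each $\mu_i$ is nonzero in $\FFp$; writing the constant $s$ as $s \cdot 1^m$ and invoking the linear independence of the distinct characters $m \mapsto \delta^m$ (equivalently, a nonvanishing Vandermonde determinant in the $\delta_i$), I would conclude that $1$ must occur among the $\delta_i$ and be the only value, forcing $r = 1$, $\delta_1 = 1$, and hence $\bar\gamma_j = 1$ for every unit root. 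Thus each unit root $\gamma_j$ satisfies $\gamma_j \equiv 1 \pmod{\mathfrak{m}}$, that is $|\gamma_j - 1|_p < 1$, which is precisely the assertion that it is a $1$-unit. I expect the only delicate point to be the clean justification of the linear-independence (Vandermonde) step over $\FFp$ together with the verification that the multiplicities are genuinely invertible there, which is exactly what $p > n$ guarantees; note that the argument delivers the $1$-unit conclusion directly, without requiring one to identify $n$ with $s$ in advance.
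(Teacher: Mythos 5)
Your proof is correct, and it diverges from the paper's at the decisive step. Both arguments open the same way: each character value is a $p$-th root of unity, hence $\equiv 1$ modulo the maximal ideal, so $S_m(f/g) \equiv -(\text{number of distinct roots of } g)$ for all $m$, and the non-unit reciprocal roots die modulo the maximal ideal, leaving the power sums of the unit roots constant. From there the paper stays integral: it feeds the congruences $p_m \equiv n$ into Newton's identities (using $p>n$ to divide by $k \leq n$) to get $e_k \equiv \binom{n}{k}$, concludes $\prod_j(1-\eta_j) \equiv \sum_k (-1)^k\binom{n}{k} = 0$, so \emph{some} unit root is a $1$-unit, and then peels off that root and iterates. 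You instead pass fully to the residue field $\FFp$, group the reductions of the unit roots into distinct values with multiplicities, and invoke linear independence of the distinct characters $m \mapsto \delta^m$ (a Vandermonde nonvanishing) to force the single value $1$; the hypothesis $p>n$ enters only to guarantee the multiplicities are nonzero in $\F_p$. Your route captures all the unit roots in one stroke rather than by induction, and, as you note, it does not need to identify the number of unit roots with the number of roots of $g$ in advance, whereas the paper's argument implicitly uses that both equal $n$. The paper's route has the mild virtue of producing the explicit congruences $e_k \equiv \binom{n}{k}$ for the coefficients of the unit-root factor along the way. One small point to make explicit when writing yours up: in the case where $1$ does not occur among the distinct values $\delta_i$, the contradiction comes from the vanishing of some multiplicity $\mu_i$ with $0 < \mu_i \leq n < p$, which requires knowing there is at least one unit root; this holds because $g(0)=0$ forces $n \geq 1$ (and the statement is vacuous otherwise).
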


\begin{proof}
Let $\eta_1, \ldots, \eta_n$ denote the unit roots of $L(f/g, T)$. Let $\tilde \pi$ be the uniformizer of the ring of integers of the field obtained by adjoining the zeros of $L(f/g, T)$ to $\bb Q_p(\zeta_p)$. Since $L(f/g, T) = \prod (1 - \eta_j T)$ mod($\tilde \pi$), we see that $S_m(f/g) \equiv - \sum \eta_j^m$ mod $\tilde \pi$. On the other hand, since $\zeta_p \equiv 1$ mod $\tilde \pi$, we have $S_m(f/g) \equiv -n$ mod $\pi$. Consequently, the power sums $p_m := \sum \eta_j^m \equiv n$ mod $\tilde \pi$ for every $m$.

Consider now the product $\prod_{j=1}^n (1 - \eta_j) = \sum_{k=0}^n (-1)^k e_k$, where $e_k$ is the $k$-th elementary symmetric polynomial in $\eta_1, \ldots, \eta_n$. Newton's identity relates $e_k$ and the power sums $p_k$:
\[
e_1 = p_1 \quad \text{and} \quad k e_k = \sum_{i=1}^k (-1)^{i+1} e_{k-i} p_i.
\]
In our case, since $p > n$ and $p_m \equiv n$ mod $\tilde \pi$, we see that $e_k \equiv \binom{n}{k}$ for each $k = 0, 1, \ldots, n$. Hence
\[
\prod_{j=1}^n (1 - \eta_j) \equiv \sum_{k=0}^n (-1)^k \binom{n}{k} = (1 - 1)^k = 0 \quad \text{mod } \tilde \pi,
\]
implying at least one of the unit roots must be a 1-unit. If we suppose $\eta_1 \equiv 1$ mod $\tilde \pi$, then we get the new power sum identity $\sum_{j=2}^n \eta_j^m \equiv n-1$ mod $\tilde \pi$. Repeating the above argument obtains the result.
\end{proof}

We note that, when the roots of $g$ lie in the base field $\bb F_q$, then Zhu \cite{Zhu-$L$-functionsofexponential-2004} has shown that the other roots have slopes lying on or above the Hodge polygon.

While unit roots of $L$-functions need not always be 1-units,  a similar statement to Theorem \ref{T: rational} for general one-variable rational functions does hold. To state this, define $Z(g, T) := \exp \left(\sum N_m \frac{T^m}{m} \right)$, the zeta function associated with the point count
\[
N_m := \# \{ \bar x \in \bb F_{q^m} \mid g(\bar x) = 0\}.
\]

\begin{theorem}\label{T: irred}
Let $d_i$ be the degrees of the irreducible polynomials factoring $g$ over $\bb F_q$. If  $p > deg(f) + n -1$, then
\[
L(f/g, T) \equiv \prod (1 - T^{d_i}) \qquad \text{mod } \pi,
\]
where $\pi := 1 - \zeta_p$.
\end{theorem}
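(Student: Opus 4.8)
The plan is to exploit the fact that, modulo $\pi = 1 - \zeta_p$, the additive character $\psi$ becomes trivial, so that the exponential sums $S_m(f/g)$ collapse to point counts. Since every value of $\psi_m$ is a power of $\zeta_p$ and $\zeta_p \equiv 1 \pmod{\pi}$, each summand reduces to $1$, giving
\[
S_m(f/g) \equiv \#\{\bar x \in \bb F_{q^m} \mid g(\bar x) \neq 0\} = q^m - N_m \pmod{\pi}.
\]
Because $p = (\text{unit}) \cdot \pi^{p-1}$ in $\bb Z[\zeta_p]$, we have $q^m \equiv 0 \pmod{\pi}$, and hence $S_m(f/g) \equiv -N_m \pmod{\pi}$ for every $m \geq 1$. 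First I would record this congruence carefully, together with the computation $Z(g,T)^{-1} = \prod_i (1 - T^{d_i})$, which follows from the fact that an irreducible factor of degree $d_i$ contributes $d_i$ roots to $\bb F_{q^m}$ exactly when $d_i \mid m$.

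Next I would translate this congruence on power sums into the desired congruence of polynomials. Writing $L(f/g,T) = \prod_{j=1}^{D}(1 - \gamma_j T)$ with $D = \deg(f)+n-1$, the defining relation of the $L$-function gives $S_m(f/g) = -\sum_{j} \gamma_j^m$, so the reciprocal-root power sums satisfy $\sum_j \gamma_j^m \equiv N_m \pmod{\pi}$. Reducing $L(f/g,T)$ modulo $\pi$, the $n$ unit roots reduce to nonzero elements of $\FFp$ while the remaining reciprocal roots (being algebraic integers of positive valuation) reduce to $0$; thus $L(f/g,T) \bmod \pi = \prod_{\text{unit } j}(1 - \bar\gamma_j T) \in \bb F_p[T]$, a polynomial whose reciprocal-root power sums are $N_m \bmod p$. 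These are exactly the power sums of the reciprocal roots (the $d_i$-th roots of unity) of $\prod_i(1 - T^{d_i})$. I would then invoke Newton's identities, precisely as in the proof of Theorem \ref{T: rational}: since $p > \deg(f)+n-1$ bounds every degree in sight below $p$, the relations $k e_k = \sum_{i=1}^k (-1)^{i+1} e_{k-i} p_i$ can be solved uniquely for the $e_k$ in terms of the $p_i$. Equal power sums therefore force equal elementary symmetric functions, hence equal polynomials, yielding $L(f/g,T) \equiv \prod_i(1 - T^{d_i}) \pmod{\pi}$ (and, as a byproduct, $n = \sum_i d_i$).

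The main obstacle I anticipate is the passage from the congruence on exponential sums to the congruence on the full polynomials, which is genuinely a characteristic-$p$ statement about recovering a polynomial from the power sums of its roots. The danger is that in $\bb F_p[T]$ the power sums need not determine a polynomial when root multiplicities are divisible by $p$; this is precisely where the hypothesis $p > \deg(f)+n-1$ is essential, as it keeps both $n = \deg(L \bmod \pi)$ and $\sum_i d_i = \deg \prod_i(1-T^{d_i})$ strictly below $p$ and makes Newton's identities invertible. A secondary point requiring care is the justification that the non-unit reciprocal roots reduce to $0$, which rests on the integrality of the reciprocal roots of $L(f/g,T)$ and the definition of the unit roots; once this is in place the degree of $L \bmod \pi$ is exactly $n$ and the argument closes.
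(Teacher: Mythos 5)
Your proposal is correct and follows the same route as the paper: establish $S_m(f/g) \equiv -N_m \bmod \pi$ via $\zeta_p \equiv 1$ and $q^m \equiv 0$, identify $Z(g,T)^{-1} = \prod_i (1-T^{d_i})$, and conclude via Newton's identities using $p > \deg(f)+n-1$ to recover the polynomials from the power sums. The paper compresses this last step into ``since $p$ is greater than the degrees \ldots the result follows,'' and your write-up simply supplies the details (the reduction of non-unit reciprocal roots to $0$ and the invertibility of the Newton relations) that the paper leaves implicit.
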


\begin{proof}
Factoring $g$ into irreducibles over $\bb F_q$ of degrees $d_i$, note that $Z(g, T)^{-1} = \prod (1 - T^{d_i})$. Next, observe that $S_m(f/g) \equiv - N_m$ mod $\pi$ for every $m$. Since $p$ is greater than the degrees of $L(f/g, T)$ and $Z(g, T)^{-1}$, the result follows.
\end{proof}

\section{Complements of hypersurfaces in $\bb G_m$}\label{S: many}

We now generalize Theorem \ref{T: irred} to many variables. Let $f, g \in \bb F_q[x_1, \ldots, x_n]$ be polynomials of degree $D$ and $d$, respectively. 
Let $H_g^*$ denote the hypersurface in $\bb G_m^n$ defined by $g = 0$ over $\bb F_q$, and let $V_g^*$ denote the complement of $H_g^*$ in $\bb G_m^n$. For each positive integer $m$ define the exponential sum
\[
S_m(V_g^*, f) = \sum \psi_m(  f(x) ),
\]
where the sum runs over all $x \in V_g^*(\bb F_{q^m})$. The associated $L$-function
\[
L(V_g^*, f; T) := \exp \left( \sum_{m=1}^\infty S_m(V_g^*, f) \frac{T^m}{m} \right)
\]
is well-known to be a rational function defined over the cyclotomic field $\bb Q(\zeta_p)$, where $\zeta_p$ is a primitive $p$-th root of unity. Denote by $\hat g$ the polynomial of obtained from $g$ by replacing the coefficients with their Teichm\"uller representative. Decompose $\hat g = \hat g_d + \hat g_{d-1} + \cdots + \hat g_0$ into homogeneous forms $\hat g_i$ of degree $i$. Throughout this section, we will assume that
\begin{equation}\label{E: Riech property}
\text{$\hat g_d$ is a product of distinct irreducible factors.}
\end{equation}
This condition is not required if $n = 1$.

Let $Z(H_g^*, T) := \exp \left(\sum N_m \frac{T^m}{m} \right)$ be the zeta function associated with the point count
\[
N_m := \# H_g^*(\bb F_{q^m}) := \# \{ x \in (\bb F_{q^m}^*)^n \mid g(x) = 0 \}.
\]
As a consequence of the Dwork trace formula, the zeta function has the following simple description mod $p$ (see \cite{MR1650604}). Define the $\bb F_q$-vector space
\[
R_d := \bb F_q[x_1, \ldots, x_n]_{\leq d}
\]
consisting of polynomials over $\bb F_q$ of degree at most $d$. Define the Cartier operator $\psi_q$ as follows. Writing $x^u := x_1^{u_1} \cdots x_n^{u_n}$, then
\[
\psi_q(x^u) := 
\begin{cases}
x^{u/q} & \text{if $q \mid u_i$ for every $i$} \\
0 & \text{otherwise.}
\end{cases}
\]
Observe that multiplication by $g^{q-1}$ is a map from $R_d$ into $R_{d q}$, and thus $\psi_q \circ g^{q-1}$ is an operator on $R_d$. Wan \cite{MR1650604} observes that
\begin{equation}\label{E: Wan zeta_toric}
Z(H_g^*, T)^{(-1)^n} \equiv (1 - T) \> det(1 - \psi_q \circ g^{q-1} T \mid R_d) \qquad \text{mod } p,
\end{equation}
a polynomial of degree at most $1 + \text{dim}_{\bb F_q} R_d = 1 + \binom{n+d}{n}$.

\begin{theorem}\label{T: main_toric}
Suppose (\ref{E: Riech property}) and $p > \max\{ \binom{n + d}{n}+1, D \}$. Then
\[
L(V_g^*, f; T) \equiv (1 - T)^{(-1)^n} \cdot Z(H_g^*, T)^{-1} \quad \text{mod}(\pi)
\]
where $\pi := 1 - \zeta_p$. Moreover,
\[
L(V_g^*, f; T)^{(-1)^{n+1}} \equiv det(1 - \psi_q \circ g^{q-1} T \mid R_d) \quad \text{mod}(\pi),
\]
a polynomial of degree at most $\binom{d+n}{n}$. In particular, the number of unit roots of $L(V_g^*, f; T)$ is bounded by $\binom{d+n}{n}$.
\end{theorem}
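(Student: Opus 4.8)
The plan is to reduce the entire statement modulo $\pi = 1-\zeta_p$ to a point count, in the spirit of the proof of Theorem~\ref{T: irred}, and then feed in Wan's description (\ref{E: Wan zeta_toric}). First I would note that each value $\psi_m(f(x))$ is a power of $\zeta_p$ and hence $\equiv 1 \pmod{\pi}$; summing over $V_g^*(\bb F_{q^m})$ and using that $V_g^*$ is the complement of $H_g^*$ in $\bb G_m^n$ gives
\[
S_m(V_g^*, f) \equiv \# V_g^*(\bb F_{q^m}) = (q^m-1)^n - N_m \pmod{\pi}
\]
for every $m \ge 1$.

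Next I would promote these term-by-term congruences to a congruence of $L$-functions. Writing $\tilde L(T) := Z(\bb G_m^n, T)\,Z(H_g^*, T)^{-1}$, the relation above says that $L(V_g^*, f; T)$ and $\tilde L(T)$ have the same power sums of reciprocal roots modulo $\pi$. Since $\pi \mid p$ and $p$ exceeds the total number of reciprocal roots involved, Newton's identities --- which invert the integers $1, 2, \ldots$ --- recover the elementary symmetric functions, and hence the reductions, from the power sums, so the matching power sums force $L(V_g^*, f; T) \equiv \tilde L(T) \pmod{\pi}$, exactly as $p > \deg(f)+n-1$ did in Theorem~\ref{T: irred}. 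Expanding $(q^m-1)^n = \sum_k \binom{n}{k}(-1)^{n-k}q^{mk}$ gives $Z(\bb G_m^n, T) = \prod_{k=0}^n (1-q^kT)^{(-1)^{n-k+1}\binom{n}{k}}$, and as $q \equiv 0 \pmod{\pi}$ only the $k=0$ factor survives, so $Z(\bb G_m^n, T) \equiv (1-T)^{(-1)^{n+1}} \pmod{\pi}$; this yields the first congruence.

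With the first congruence in hand, I would substitute Wan's identity (\ref{E: Wan zeta_toric}), which presents $Z(H_g^*, T)^{(-1)^n}$ as $\det(1-\psi_q\circ g^{q-1}T \mid R_d)$ times a power of $1-T$ (reading the mod-$p$ identity modulo $\pi$). Raising the first congruence to the power $(-1)^{n+1}$, the powers of $1-T$ cancel and leave the second congruence
\[
L(V_g^*, f; T)^{(-1)^{n+1}} \equiv \det(1-\psi_q\circ g^{q-1}T \mid R_d) \pmod{\pi},
\]
whose right-hand side is a polynomial of degree at most $\dim_{\bb F_q} R_d = \binom{d+n}{n}$.

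For the unit-root count I would use that reduction modulo the maximal ideal above $\pi$ sends a factor $1-\alpha T$ to $1$ when $\alpha$ is a non-unit and to a nonzero factor when $\alpha$ is a $p$-adic unit; since raising to $\pm 1$ does not change which reciprocal roots are units, the unit reciprocal roots of $L(V_g^*, f; T)$ correspond precisely to the nonzero roots of the polynomial $\det(1-\psi_q\circ g^{q-1}T \mid R_d)$, of which there are at most $\binom{d+n}{n}$. The main obstacle is the upgrade step in the second paragraph: because $L(V_g^*, f; T)$ is a genuine rational function I must track reciprocal roots in both the numerator and the denominator and verify that their total number stays below $p$ --- so that Newton's identities remain invertible modulo $\pi$ and equal power sums force equal reductions --- and identifying which part of the hypothesis $p > \max\{\binom{n+d}{n}+1, D\}$ bounds each side is the delicate bookkeeping.
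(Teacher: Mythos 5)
Your opening step (reducing $S_m(V_g^*,f)$ modulo $\pi$ to the point count $(q^m-1)^n-N_m$, and the expansion showing $Z(\G_m^n,T)\equiv (1-T)^{(-1)^{n+1}} \pmod{\pi}$) and your closing steps (feeding in Wan's identity and reading off the unit-root bound from the degree of the determinant) agree with the paper. The genuine gap is precisely the step you defer as ``delicate bookkeeping'': it is not bookkeeping, and it cannot be closed along the lines you propose. Passing from matching power sums mod $\pi$ to matching rational functions mod $\pi$ via Newton's identities requires the total number of reciprocal zeros and poles involved to be less than $p$, but the hypothesis $p>\max\{\binom{n+d}{n}+1,D\}$ does \emph{not} bound the total degree of $L(V_g^*,f;T)$ as a rational function: by the Adolphson--Sperber degree estimates that total grows like a power of $D$ in $n$ variables and in general far exceeds $p$. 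So ``equal power sums force equal reductions'' fails as stated --- you would be inverting integers divisible by $p$. A telltale sign is that your argument never invokes hypothesis (\ref{E: Riech property}).

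The paper closes this gap with two inputs. First, the Reich trace formula (this is where (\ref{E: Riech property}) is used) gives $L(V_g^*,f;T)^{(-1)^{n+1}}=\det(1-\alpha T)^{\delta^n}$ for a single nuclear operator $\alpha$; since $\delta$ only introduces factors $h(qT)\equiv 1 \pmod{q}$, this yields $L(V_g^*,f;T)^{(-1)^{n+1}}\equiv \det(1-\alpha T) \pmod{q}$, an entire series with no poles to track. Second, the Adolphson--Sperber Newton-polygon lower bound shows that every coefficient of $\det(1-\alpha T)$ beyond degree $W(0)=\binom{n+d}{n}$ has $q$-adic valuation at least $1/D$, and the hypothesis $p>D$ converts this into positive $\pi$-adic valuation; hence the reduction mod $\pi$ is a genuine polynomial of degree at most $\binom{n+d}{n}<p$. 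Only after this truncation does the Newton-identity comparison with the (similarly low-degree) right-hand side coming from Wan's formula become legitimate. Without the Reich and Adolphson--Sperber input, the central congruence of the theorem is unproved.
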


\begin{proof}
From the Reich trace formula \cite{MR0255549},
\[
L(V_g^*, f; T)^{(-1)^{n+1}} = det(1 - \alpha T)^{\delta^n} = \frac{\prod (1 - \omega_i T)}{\prod (1 - \eta_j T)},
\]
where $\alpha$ is a $p$-adic operator acting on a space of functions, and $\delta$ is the map defined by $h(T)^\delta := h(T) / h(qT)$. Hence, $L(V_g^*, f; T)^{(-1)^{n+1}} \equiv det(1 - \alpha T)$ mod $q$. We refine this further using the work of Adolphson-Sperber \cite{MR573096}, which says that the $q$-adic Newton polygon of $det(1 - \alpha T)$ lies on or above the lower convex hull of the points $(0, 0)$ and $(\sum_{k=0}^m W(k), D^{-1} \sum_{k=0}^m k W(k))$ for $m = 0, 1, 2, \ldots$. We refer the reader to \cite{MR573096} for the definition of $W(k)$ as it is complicated; however, we do note some properties. First, $W(k) \geq \binom{n+k-1}{k}$ for $k \geq 0$, and so $W(k) > 0$ for $k \geq 0$. Second $W(0) =  \binom{n+d}{n}$.

By hypothesis $p > D$, and so $ord_\pi(\cdot) \geq (p-1) ord_q(\cdot) \geq D \> ord_q(\cdot)$. Consequently, $det(1 - \alpha T)$ mod $\pi$ is a polynomial of degree at most $W(0)$. Next, since $\zeta_p \equiv 1$ mod $\pi$ and $\# V_g^*(\bb F_{q^m}) \equiv (-1)^n - \# H_g^*(\bb F_{q^m})$ mod $q$, we have
\begin{equation}\label{E: equiv}
(-1)^{n+1} S_m(V_g^*, f) \equiv (-1)^{n+1} \# V_g^*(\bb F_{q^m}) \equiv -1 + (-1)^n \# H_g^*(\bb F_{q^m}) \qquad \text{mod } \pi. 
\end{equation}
By (\ref{E: Wan zeta_toric}), $Z(H_g^*, T)^{(-1)^n}$ mod $p$ is a polynomial of degree at most $\binom{d+n}{n}$. Since $p$ is strictly larger than the maximum degrees $W(0)$ and $\binom{d+n}{n}$, equation (\ref{E: equiv}) implies
\[
L(V_g^*, f; T)^{(-1)^{n+1}} \equiv (1- T)^{-1} \cdot Z(H_g^*, T)^{(-1)^n} \equiv det(1 - \psi \circ g^{q-1} T \mid R_d) \qquad \text{mod } \pi.
\]
\end{proof}

\bigskip\noindent
{\bf Remarks.}
\begin{enumerate}
\item Toric exponential sums are those defined over the algebraic torus $\bb G_m^n$. In this case, $g(x) = x_1 \cdots x_n$, and the above shows $L(\bb G_m^n, f; T)^{(-1)^{n+1}}$ has a unique $p$-adic unit root. Adolphson and Sperber \cite{MR2966711} have demonstrated that this unit root has a formula similar to (\ref{E: formula}) using $\c A$-hypergeometric functions.
\item It is quite likely that improvements may be made weakening the condition $p > \max\{ \binom{n + d}{d}+1, D \}$. More interesting would be to obtain formulas for higher congruences. 
\item Through personal communication, Alan Adolphson suggests that $\binom{n + d}{d}$ is generically the precise number of unit roots of $L(V_g^*, f; T)^{(-1)^{n+1}}$. This would likely take the form of an Hasse polynomial (see, for example, \cite{MR2301225}).
\end{enumerate}

\section{Complements of hypersurfaces in affine space}\label{S: many_affine}

Let $f, g \in \bb F_q[x_1, \ldots, x_n]$ be polynomials of degree $D$ and $d$, respectively. Let $H_g$ be the variety in $\bb A^n$ defined by $g = 0$ over $\bb F_q$. Denote by $V_g$ the complement of $H_g$ in $\bb A^n$. For each positive integer $m$ define the exponential sum
\[
S_m(V_g, f) = \sum \psi_m(  f(x) ),
\]
where the sum runs over all $x \in V_g(\bb F_{q^m})$. The associated $L$-function
\[
L(V_g, f; T) := \exp \left( \sum_{m=1}^\infty S_m(V_g, f) \frac{T^m}{m} \right)
\]
is a rational function defined over the cyclotomic field $\bb Q(\zeta_p)$, where $\zeta_p$ is a primitive $p$-th root of unity. 

As in the previous section, let $\hat g$ denote the Teichm\"uller lift of $g$. Set $S := \{1, 2, \ldots, n\}$. For a subset $J \subset S$, denote by $\hat g_J$ the polynomial obtained from $\hat g$ by setting $x_i = 0$ for every $i \in J$. Throughout this section, we will assume that 
\begin{equation}\label{E: Reich2}
\text{for every $J \subset S$, the highest degree homogenous form of $\hat g_J$ is a product of distinct irreducible factors.}
\end{equation}

Let $Z(H_g, T) := \exp \left(\sum N_m \frac{T^m}{m} \right)$ be the zeta function associated with the point count
\[
N_m := \# H_g(\bb F_{q^m}) := \# \{ x \in (\bb F_{q^m})^n \mid g(x) = 0 \}.
\]
As a consequence of the Dwork trace formula, the zeta function has the following simple description mod $p$ (see \cite{MR1650604}). Define the $\bb F_q$-vector space
\[
W_d := (x_1 \cdots x_n \bb F_q[x_1, \ldots, x_n])_{\leq d}
\]
consisting of polynomials over $\bb F_q$ of degree at most $d$ which are divisible by the monomial $x_1 \cdots x_n$. Observe that multiplication by $g^{q-1}$ is a map from $W_d$ into $W_{d q}$, and thus $\psi_q \circ g^{q-1}$ is an operator on $W_d$. Wan \cite{MR1650604} shows that
\begin{equation}\label{E: Wan zeta_affine}
Z(H_g, T)^{(-1)^n} \equiv det(1 - \psi_q \circ g^{q-1} T \mid W_d) \qquad \text{mod } p,
\end{equation}
a polynomial of degree at most $\text{dim}_{\bb F_q} W_d = \binom{d}{n}$. Observe that the Chevalley-Warning result $\# V_g(\bb F_q) \equiv 0$ mod $p$ follows if $n > d$ since $W_d = 0$ in this case.

\begin{theorem}\label{T: main_affine}
Suppose (\ref{E: Reich2}) and $p > \max\{ \binom{n + d}{d}+1, D \}$. Then
\[
L(V_g, f; T) \equiv Z(H_g, T)^{-1} \quad \text{mod}(\pi)
\]
where $\pi := 1 - \zeta_p$. Moreover,
\[
L(V_g, f; T)^{(-1)^{n+1}} \equiv det(1 - \psi_q \circ g^{q-1} T \mid W_d) \quad \text{mod}(\pi),
\]
a polynomial of degree at most $\binom{d}{n}$. In particular, the number of unit roots of $L(V_g, f; T)$ is bounded by $\binom{d}{n}$.
\end{theorem}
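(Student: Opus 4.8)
The plan is to mirror the proof of Theorem \ref{T: main_toric} almost verbatim, since the affine case differs only in the combinatorial bookkeeping and in the absence of the extra $(1-T)$ factor. First I would invoke the Reich trace formula \cite{MR0255549} to write
\[
L(V_g, f; T)^{(-1)^{n+1}} = det(1 - \alpha T)^{\delta^n} = \frac{\prod(1 - \omega_i T)}{\prod(1 - \eta_j T)},
\]
where $\alpha$ is the relevant $p$-adic Dwork operator and $\delta h(T) := h(T)/h(qT)$, so that $L(V_g, f; T)^{(-1)^{n+1}} \equiv det(1 - \alpha T)$ mod $q$. As before, the Adolphson--Sperber estimate \cite{MR573096} places the $q$-adic Newton polygon of $det(1-\alpha T)$ on or above a lower convex hull whose first breakpoint has horizontal coordinate $W(0)$; the hypothesis $p > D$ then converts the $q$-adic bound into a $\pi$-adic one via $ord_\pi(\cdot) \geq (p-1)\,ord_q(\cdot) \geq D\,ord_q(\cdot)$, so that $det(1-\alpha T)$ mod $\pi$ is a polynomial of degree at most $W(0)$.

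Next I would relate the exponential sum to the point count. Since $\zeta_p \equiv 1$ mod $\pi$, each character value $\psi_m(f(x))$ collapses to $1$, giving $S_m(V_g, f) \equiv \# V_g(\bb F_{q^m})$ mod $\pi$, and hence
\[
(-1)^{n+1} S_m(V_g, f) \equiv (-1)^{n+1} \# V_g(\bb F_{q^m}) \qquad \text{mod } \pi.
\]
The essential difference from the toric setting is that here $V_g = \bb A^n \setminus H_g$ rather than $\bb G_m^n \setminus H_g^*$, so $\# V_g(\bb F_{q^m}) = q^{mn} - \# H_g(\bb F_{q^m}) \equiv -\# H_g(\bb F_{q^m})$ mod $q$, and no stray $(1-T)$ factor appears. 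This is precisely why the conclusion is $L(V_g, f; T) \equiv Z(H_g, T)^{-1}$ mod $\pi$ without the $(1-T)^{(-1)^n}$ correction present in Theorem \ref{T: main_toric}.

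I would then combine these congruences exactly as in the toric proof. By \eqref{E: Wan zeta_affine}, $Z(H_g, T)^{(-1)^n}$ mod $p$ is a polynomial of degree at most $\binom{d}{n} = \dim_{\bb F_q} W_d$. Since $p$ strictly exceeds both this degree and $W(0)$, a rational function congruent to $Z(H_g, T)^{(-1)^n}$ mod $\pi$ whose numerator and denominator have $\pi$-degree controlled by these bounds must in fact equal it on the nose mod $\pi$; this forces
\[
L(V_g, f; T)^{(-1)^{n+1}} \equiv Z(H_g, T)^{(-1)^n} \equiv det(1 - \psi_q \circ g^{q-1} T \mid W_d) \qquad \text{mod } \pi,
\]
a polynomial of degree at most $\binom{d}{n}$, and the unit-root bound follows immediately since unit roots reduce nontrivially mod $\pi$.

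The step I expect to be the main obstacle is verifying that hypothesis \eqref{E: Reich2} supplies exactly what the Reich trace formula requires in the affine case. In the toric setting a single leading-form condition \eqref{E: Riech property} sufficed, but over $\bb A^n$ the sum decomposes according to which coordinate subtori a point lies in, so one must control the behavior of $g$ restricted to every coordinate subspace $x_i = 0$ for $i \in J$; this is what the condition on the leading form of each $\hat g_J$ is designed to guarantee. I would need to confirm that \eqref{E: Reich2} ensures the nondegeneracy needed for the trace formula to apply on each stratum and that the degree bound $\binom{d}{n}$ for $\dim W_d$ is the relevant one, rather than routine reindexing of the one-variable or toric arguments.
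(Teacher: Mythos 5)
Your proposal diverges from the paper's proof in a way that leaves a real gap, and the gap is exactly the one you flag in your final paragraph. The paper does not re-run the Reich trace formula on the affine complement at all. Its entire proof is the stratification you mention but do not carry out: write $\bb A^n$ as the disjoint union of its coordinate subtori, so that
\[
L(V_g, f; T) = \prod_{J \subset S} L(V_{g_J}^*, f_J; T),
\]
and apply Theorem \ref{T: main_toric} to each factor. This is precisely what hypothesis (\ref{E: Reich2}) is for --- it asserts condition (\ref{E: Riech property}) for every stratum $\hat g_J$, and the hypothesis $p > \binom{n+d}{n}+1$ dominates the corresponding bound $\binom{(n-|J|)+d}{\,n-|J|}+1$ on every stratum. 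The stray factors $(1-T)^{(-1)^{n-|J|}}$ from the toric theorem cancel in the product because $\sum_{J \subset S} (-1)^{n-|J|} = (1-1)^n = 0$, and $\prod_J Z(H_{g_J}^*, T)^{-1} = Z(H_g,T)^{-1}$ exactly, since $\# H_g(\bb F_{q^m}) = \sum_J \# H_{g_J}^*(\bb F_{q^m})$. The determinant form and the degree bound $\binom{d}{n}$ then come directly from (\ref{E: Wan zeta_affine}), not from any Newton polygon estimate.

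By contrast, your direct route asserts a Reich trace formula and an Adolphson--Sperber Hodge bound for an operator attached to the affine complement itself. The trace formula as used in the paper is the toric one, and you never identify what $W(0)$ is for the affine operator: if you simply transplant the toric value $W(0) = \binom{n+d}{n}$ you do not obtain the sharper degree bound $\binom{d}{n} = \dim_{\bb F_q} W_d$ claimed in the theorem, and if you want an affine analogue you must prove or locate it, including checking that (\ref{E: Reich2}) --- a condition quantified over all subsets $J$ --- is what that analogue requires. That quantification over $J$ is itself the tell that the intended argument is the stratification. Your congruence $S_m(V_g,f) \equiv -\# H_g(\bb F_{q^m})$ mod $\pi$ and the comparison of low-degree polynomials via power sums are fine; the missing piece is a legitimate source for the claim that $L(V_g,f;T)^{(-1)^{n+1}}$ mod $\pi$ is a polynomial of controlled degree, which the paper obtains for free by invoking Theorem \ref{T: main_toric} on each stratum.
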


\begin{proof}
The toric decomposition of affine space implies
\[
L(V_g, f; T) = \prod_{J \subset S} L(V_{g_J}^*, f_J; T).
\]
Thus, by Theorem \ref{T: main_toric},
\[
L(V_g, f; T) \equiv \prod_{J \subset S} Z( H_{g_J}^*, T)^{-1} \equiv Z(H_g, T)^{-1} \quad  \text{ mod } \pi
\]
The rest of the result follows from (\ref{E: Wan zeta_affine}).
\end{proof}

An immediate corollary of the above is the following Chevalley-Warning type result:

\begin{corollary}\label{C: CW}
Under the same conditions as Theorem \ref{T: main_affine}, suppose further that $n > d$. Then $ord_q S_1(V_g, f) > 0$. 
\end{corollary}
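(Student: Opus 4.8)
The plan is to deduce the corollary directly from Theorem \ref{T: main_affine} by exploiting the vanishing of $W_d$. When $n > d$, every monomial divisible by $x_1 \cdots x_n$ has degree at least $n > d$, so
\[
W_d = (x_1 \cdots x_n \bb F_q[x_1, \ldots, x_n])_{\leq d} = 0,
\]
exactly as observed in the remark preceding the theorem. The operator $\psi_q \circ g^{q-1}$ then acts on the zero space, and its characteristic series is the empty product $det(1 - \psi_q \circ g^{q-1} T \mid W_d) = 1$. Substituting into the second congruence of Theorem \ref{T: main_affine} yields $L(V_g, f; T)^{(-1)^{n+1}} \equiv 1 \pmod{\pi}$.

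Next I would read off the linear coefficient. From the definition $L(V_g, f; T) = \exp(\sum_m S_m(V_g, f) T^m / m) = 1 + S_1(V_g, f) T + O(T^2)$, the coefficient of $T$ in $L(V_g, f; T)^{(-1)^{n+1}}$ is $(-1)^{n+1} S_1(V_g, f)$, which up to sign equals $S_1(V_g, f)$. Since the whole series is congruent to the constant $1$ modulo $\pi$, this coefficient vanishes modulo $\pi$, forcing $S_1(V_g, f) \equiv 0 \pmod{\pi}$, i.e. $ord_\pi S_1(V_g, f) \geq 1$.

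Finally I would convert the $\pi$-adic estimate into a $q$-adic one. Writing $q = p^a$, the extension $\bb Q_p(\zeta_p)/\bb Q_p$ is totally ramified of degree $p-1$ with uniformizer $\pi$, so $ord_\pi(\cdot) = (p-1)\, a\, ord_q(\cdot)$ on the field in question; this is the same comparison already used in the proof of Theorem \ref{T: main_toric}. Hence $ord_q S_1(V_g, f) = ord_\pi S_1(V_g, f) / ((p-1)a) \geq 1/((p-1)a) > 0$, which is the assertion.

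The argument is essentially immediate once Theorem \ref{T: main_affine} is available; the only point requiring care is the final normalization step, where one must keep track of both the ramification and the residue-degree contributions relating $ord_\pi$ and $ord_q$ so as not to lose strict positivity. Because $S_1(V_g, f)$ is an algebraic integer, the bound $ord_\pi S_1(V_g, f) \geq 1$ survives the rescaling as a genuinely strict inequality, and no hypothesis beyond those already imposed in Theorem \ref{T: main_affine} is needed.
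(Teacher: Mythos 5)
Your argument is correct and is exactly the route the paper intends: the corollary is stated as an immediate consequence of Theorem \ref{T: main_affine}, using the observation (already made in the remark preceding that theorem) that $W_d = 0$ when $n > d$, so that $\det(1 - \psi_q \circ g^{q-1} T \mid W_d) = 1$ and hence $S_1(V_g, f) \equiv 0 \bmod \pi$. Your bookkeeping relating $ord_\pi$ and $ord_q$ is also consistent with what is used in the proof of Theorem \ref{T: main_toric}, so there is nothing to add.
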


Theorem \ref{T: main_affine} and Corollary \ref{C: CW} raise the following:

\bigskip\noindent
{\bf Questions and Remarks.}
\begin{enumerate}
\item Observe that for a parametrized family of hypersurfaces $g_\lambda$ and regular functions $f_\lambda$ on $V_{g_\lambda}$, dependent on an algebraic parameter $\lambda$, it is expected that the unit roots of $L(V_{g_\lambda}, f_\lambda; T)$ modulo an appropriate uniformizer will only depend on the family $g_\lambda$. For example, setting $g_\lambda = y^2 - x(x-1)(x-\lambda)$, the Legendre family of (affine) elliptic curves mentioned in the introduction, then by (\ref{T: main_affine}) the $L(V_{g_\lambda}, f_\lambda;  T)^{-1}$ will have a unique unit root $\pi_0(\lambda)$ satisfying (\ref{E: formula}) when $H(\lambda) \not= 0$.
\item When does $L(V_g, f; T)$, or equivalently, $Z(V_g, T)$, only have unit roots which are 1-units? This will be the case when $det(1 - \psi_q \circ g^{q-1} T)$ equals $(1-T)^e$ for some $e \leq \binom{d}{n}$. This comes down to looking for rational functions $f/g$ which are eigenvectors of $\psi_q$ with eigenvalue 0 or 1. Can these be characterized? 
\item There have been various improvements to the classical Chevalley-Warning theorem; see for example, \cite{Ax-Zeros_of_polynomials_over_finite_fields} \cite{Katz-On_thm_of_Ax} \cite{MR1215208} \cite{MR1314464}, and in particular, \cite{MR932797}. Are there analogous refinements for Corollary \ref{C: CW}? 
\end{enumerate}

\section{Remark on arithmetic mirror symmetry}\label{S: mirror}

There has been a lot of recent attention paid to studying similarities between zeta functions of mirror pairs of algebraic varieties. A selective list of references includes \cite{Aldi_hucsch}, \cite{Perunicic_2014_Introduction-to-Arithmetic-Mirror-Symmetry}, \cite{MR2454318} \cite{MR2368956} \cite{MR2385244}, \cite{MR2019149}, \cite{MR2672288}, \cite{haessig_mirror}, \cite{MR2774204}, \cite{MR2584833}, \cite{MR2282960}, \cite{MR2194729}, \cite{MR2213683}.  An alternate viewpoint is to study the relation between the complements of these varieties within some fixed ambient spaces. The following is a brief remark in this direction. 

Let $X$ and $Y$ be a strong mirror pair of Calabi-Yau varieties of dimension $d$ defined over $\bb F_q$. We refer the reader to \cite{MR2282960} for the definition of strong mirror pair. Fix projective embeddings $X \hookrightarrow \bb P^n$ and $Y \hookrightarrow \bb P^n$, and denote the complements of $X$ and $Y$ in their respective embeddings by $V_X$ and $V_Y$. Let $f$ and $g$ be regular functions on $V_X$ and $V_Y$, respectively. Define the $L$-function
\[
L(V_X, f; T) := \exp\left( \sum_{m=1}^\infty S_m(V_X, f) \frac{T^m}{m} \right),
\]
where
\[
S_m(V_X, f) := \sum_{\bar x \in V_X(\bb F_{q^m})} \psi_m( f(\bar x)).
\]
Similarly, define $L(V_Y, g; T)$.

\medskip\noindent{\bf Question.} Under suitable conditions on the characteristic $p$, is there a uniformizer $\pi$ such that $L(V_X, f; T) \equiv L(V_Y, g; T)$ mod $\pi$?

A heuristic proof is as follows. Theorems \ref{T: main_toric} and \ref{T: main_affine} suggest that there may exist a uniformizer $\pi$ such that $L(V_X, f; T) \equiv Z(X, T)^{-1}$ mod $\pi$  and $L(V_Y, f; T) \equiv Z(Y, T)^{-1}$ mod $\pi$. The result would then follow from a conjecture of Wan \cite{MR2282960}, which implies that the zeta functions of strong mirror pairs satisfy $Z(X, T) \equiv Z(Y, T)$ mod $q$.

As Wan's conjecture has been proven in the case of the Dwork family of hypersurfaces and its mirror (see \cite{MR2282960}), this question can likely be proven in this case.

\bibliographystyle{amsplain}
\bibliography{../References/References.bib}

\end{document}